\newtheorem{thm}{Theorem}
\newtheorem{lem}[thm]{Lemma}
\newtheorem{prop}[thm]{Proposition}
\theoremstyle{definition}
\theoremstyle{remark}
\newtheorem{qu}[thm]{Question}
\newcommand{\E}{\mathcal{E}}
\newcommand{\mset}{\emptyset}
\newcommand{\C}{\mathcal{C}}
\begin{document}
\baselineskip=18pt
\title{Properties of the space of sections of some Banach bundles}
\author{Aldo J. Lazar}
\address{School of Mathematical Sciences\\
         Tel Aviv University\\
         Tel Aviv 69978, Israel}
\email{aldo@post.tau.ac.il}
\date{February 5, 2018}

\subjclass{46B20, 46B99}
\keywords{Banach bundle, space of sections}

\commby{}
\begin{abstract}

One shows for Banach bundles in a certain class that having a second countable locally compact Hausdorff base space and separable fibers implies the separability of the Banach space of the all sections that vanish at infinity. In the reverse direction, it is proved that for a Banach bundle with locally compact Hausdorff base space the separability of the space of all the sections that vanish at infinity implies that the base space is second countable. If the base space is compact and the space of all the sections of the bundle is generated by a weakly compact subset then the base space is an Eberlein compact.

\end{abstract}
\maketitle
% -------------------------------------------------------------
\section{Introduction}

   It is well known that a compact Hausdorff space $T$ is second countable if and only if the Banach space $C(T)$ consisting of all the scalar valued functions on $T$ is separable. Similarly, the compact Hausdorff $T$ is an Eberlein compact if and only if $C(T)$ is generated by a weakly compact subset. We generalize to some extent these results to Banach bundles with compact and locally compact Hausdorff base spaces. To establish the separability of the space of sections we restrict the discussion to locally uniform Banach bundles, a class of Banach bundles that was introduced in \cite{L}; its definition will be given below. Namely, we prove that the space of all the sections that vanish at infinity is separable provided that the Banach bundle is locally uniform, the base space is a second countable locally compact Hausdorff space, and all the fiber spaces of the bundle are separable. In the other direction we prove that if the Banach space of all the sections of the Banach bundle $\xi := (\E,p,T)$ that vanish at infinity on the locally compact Hausdorff space $T$ is separable then $T$ is second countable; of course, each fiber is a separable Banach space. We also show that if the base space is compact Hausdorff and the space of all the sections of $\xi$ is weakly compact generated (WCG) then $T$ is Eberlein compact. For other separability results one should consult \cite[215 ff]{G} and the references there.

   We discuss here Banach bundles for which the norm is upper semi-continuous that is $(H)$ Banach bundles in the terminology of \cite{DG}. We shall always suppose that the base space of a Banach bundle is Hausdorff. For a Banach bundle $\xi := (\E,p,T)$ we denote by $\Gamma(\xi)$ the space of all the sections of $\xi$; if the base space is compact $\Gamma(\xi)$ is a Banach space when one endows it with the sup norm. Similarly, if $T$ is locally compact then $\Gamma_0(\xi)$ stands for the space of all the sections of $\xi$ that vanish at infinity; we endow it with the sup norm to make it a Banach space.

   We shall denote the closed unit ball of the Banach space $Y$ by $Y_1$. Let $X$ be a Banach space with its family of bounded closed subsets endowed with the Hausdorff metric, $T$ a Hausdorff topological space and $t\to X(t), t\in T$ a map such that $t\to X(t)_1$ is continuous. Set $\E := \cup_{t\in T} (\{t\}\times X(t))$ and define $p :\E\to T$ by $p((t,x)) := t$. Given an open subset $V$ of $T$ and an open subset $O$ of $X$ we denote $\mathcal{U}(V,O) := \{(t,x)\mid t\in V,\ x\in O\cap X(t)\}$. It is shown in \cite[section 5]{L} that the family of all the sets $\{\mathcal{U}(V,O)\}$ is the base of a topology on $\E$ such that $(\E,p,T)$ is a Banach bundle for which the norm is continuous on the bundle space $\E$. A Banach bundle isomorphic to a Banach bundle as detailed above is called a uniform Banach bundle. A Banach bundle $\xi := (\E,p,T)$ is called a locally uniform Banach bundle if there is a family $\{T_{\iota}\}$ of closed subsets of $T$ such that $\{\mathrm{Int}(T_{\iota})\}$ is an open cover of $T$ and every  restriction $\xi\mid T_{\iota}$ is a uniform Banach bundle.

   An Eberlein compact is a compact Hausdorff space that is homeomorphic to a weakly compact subset of a Banach space. A Banach space $X$ is called weakly compact generated (WCG) if it contains a weakly compact subset whose linear span is dense in $X$. A thorough discussion of Eberlein compact spaces and of WCG Banach spaces can be found in \cite{Z}.

\section{results}

We begin by discussing conditions on the Banach bundle that ensure the separability of the space of sections.

\begin{prop} \label{P:compact}

   Let $\xi := (\E,p,T)$ be a locally uniform Banach bundle with $T$ compact and metrizable and $p^{-1}(t)$ separable for each $t\in T$. Then $\Gamma(\xi)$ is separable.

\end{prop}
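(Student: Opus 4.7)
The plan is to reduce to the uniform case via a finite partition of unity, and then to exploit the Hausdorff-metric continuity of $t\mapsto X(t)_1$ together with the separability of each fiber to embed the space of sections into a separable $C(T,Y)$.

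For the reduction, I would first use compactness of $T$ to extract a finite subcover $T_1,\dots,T_n$ of the open cover $\{\mathrm{Int}(T_\iota)\}$ and pick a continuous partition of unity $\{\phi_i\}$ with $\mathrm{supp}(\phi_i)\subset\mathrm{Int}(T_i)$, which exists since compact metric spaces are normal. The key observation is that for $\tau\in\Gamma(\xi\mid T_i)$, the formula $\wt{\phi_i\tau}(t)=\phi_i(t)\tau(t)$ for $t\in T_i$ and $\wt{\phi_i\tau}(t)=0$ (the zero of the fiber) for $t\in T\setminus T_i$ defines a global section of $\xi$: continuity at boundary points of $T_i$ is automatic because those points lie outside $\mathrm{supp}(\phi_i)\subset\mathrm{Int}(T_i)$, so the extension coincides with the zero section on a neighborhood of each such point. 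Since every $\sigma\in\Gamma(\xi)$ equals $\sum_i\wt{\phi_i(\sigma\mid T_i)}$ and the extension maps are linear of norm at most one, separability of each $\Gamma(\xi\mid T_i)$ will imply separability of $\Gamma(\xi)$ by taking finite sums of elements from countable dense sets.

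For the uniform case I may replace $\xi$ by its model $\E=\bigcup_{t\in T}(\{t\}\times X(t))$ with $X(t)$ closed subspaces of some ambient Banach space $X$, so that sections correspond to continuous $f:T\to X$ with $f(t)\in X(t)$. The heart of the argument will be showing that $Y:=\overline{\bigcup_{t\in T} X(t)}$ is separable. My plan is to fix a countable dense set $\{t_k\}\subset T$, a countable dense set $\{x^k_n\}_n\subset X(t_k)$ for each $k$, and show that $\{x^k_n\}_{k,n}$ is dense in $\bigcup_t X(t)$. Given $x\in X(t)$ and $\eps>0$, I would pick $t_k$ so close to $t$ that the Hausdorff distance between $X(t)_1$ and $X(t_k)_1$ is less than $\eps/(2\|x\|+2)$, rescale $x/\|x\|$ to find $x'\in X(t_k)$ within $\eps/2$ of $x$, and then approximate $x'$ by some $x^k_n$ within $\eps/2$. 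Once $Y$ is separable, $\Gamma(\xi)$ sits as a closed subspace of the separable space $C(T,Y)$, and the claim follows.

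The hardest step will be the separability of $Y$: this is where the Hausdorff-metric continuity of $t\mapsto X(t)_1$ is genuinely used, and one must carefully pass from the unit-ball approximation statement to an approximation at arbitrary norm-bounded vectors by rescaling. The partition-of-unity reduction and the embedding into $C(T,Y)$ are routine in comparison.
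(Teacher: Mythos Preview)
Your argument is correct. The uniform case matches the paper's treatment almost verbatim: both show that the closed linear span of $\bigcup_t X(t)$ is separable by combining a countable dense subset of $T$ with countable dense subsets of the fibers, and then embed $\Gamma(\xi)$ in the separable space $C(T,Y)$.

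Where you diverge is in the reduction from locally uniform to uniform. The paper proceeds by induction on the number $n$ of closed pieces $T_i$: it restricts to $S=\bigcup_{i=1}^{n-1}T_i$, observes that the kernel of the restriction map embeds in $\Gamma(\xi\mid T_n)$, and then invokes a lemma (proved via the Bartle--Graves selection theorem) asserting that a Banach space is separable whenever a closed subspace and the corresponding quotient are. Your partition-of-unity reduction is more elementary and more transparent: it avoids the subspace/quotient lemma and the appeal to Bartle--Graves entirely, and it makes the separability of $\Gamma(\xi)$ visible as a direct consequence of the separability of the finitely many $\Gamma(\xi\mid T_i)$ via bounded linear extension maps. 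The paper's route, on the other hand, isolates a reusable structural fact about separability in short exact sequences, which may be of independent interest even if it is heavier machinery than the present application requires.
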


The proof uses a lemma that undoubtedly is known but a proof is included for lack of a reference. The proof presented here relies on a rather advanced result but the lemma can be given a direct elementary proof.

\begin{lem} \label{L:separable}

  Let $X$ be a Banach space and $Y$ a closed subspace of $X$. If $Y$ and $X/Y$ are separable then $X$ is separable.

\end{lem}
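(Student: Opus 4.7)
The plan is to build a countable dense subset of $X$ directly from countable dense subsets of $Y$ and $X/Y$, using nothing beyond the definition of the quotient norm. Let $q:X\to X/Y$ denote the quotient map, choose a countable dense sequence $\{y_m\}_{m\in\mathbb{N}}$ in $Y$ and a countable dense sequence $\{\xi_n\}_{n\in\mathbb{N}}$ in $X/Y$, and for each $n$ fix a lift $x_n\in X$ with $q(x_n)=\xi_n$. The candidate countable dense subset of $X$ is
\[
D:=\{x_n+y_m : n,m\in\mathbb{N}\}.
\]

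To show $D$ is dense, I would fix $x\in X$ and $\eps>0$, and first choose $n$ with $\|q(x)-\xi_n\|<\eps/2$. Then $q(x-x_n)=q(x)-\xi_n$ has quotient norm strictly less than $\eps/2$, so by the very definition of the quotient norm there exists $y\in Y$ with $\|x-x_n-y\|<\eps/2$. Finally pick $y_m$ with $\|y-y_m\|<\eps/2$; the triangle inequality then gives $\|x-(x_n+y_m)\|<\eps$, as required.

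I do not anticipate any real obstacle: the only step that deserves a second glance is the passage from smallness of $\|q(x-x_n)\|$ in $X/Y$ to a concrete $y\in Y$ making $x-x_n-y$ small in $X$, and this is simply the fact that the quotient norm is realized as an infimum over coset representatives. The ``advanced'' proof alluded to in the paper is presumably the one that invokes the Bartle--Graves selection theorem to produce a continuous (in general nonlinear) section $s:X/Y\to X$ of $q$; then $X=s(X/Y)+Y$ as sets, each summand is separable (the first as a continuous image of a separable metric space), and so $X$ is the closed linear span of a countable set. Either route works, but the elementary one above avoids Bartle--Graves entirely.
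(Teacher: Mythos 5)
Your proof is correct, and it is precisely the ``direct elementary proof'' that the paper alludes to but does not give. The one step that needed care --- passing from $\|q(x-x_n)\|<\eps/2$ in $X/Y$ to an actual $y\in Y$ with $\|x-x_n-y\|<\eps/2$ --- is handled correctly: the quotient norm is the infimum of $\|x-x_n-y'\|$ over $y'\in Y$, so a representative within $\eps/2$ exists, and the final triangle inequality $\|x-(x_n+y_m)\|\leq\|x-x_n-y\|+\|y-y_m\|<\eps$ closes the argument. The paper instead invokes the Bartle--Graves theorem to obtain a continuous (nonlinear) section $\psi:X/Y\to X$ of the quotient map and observes that $x\mapsto(x-\psi(\theta(x)),\theta(x))$ is a homeomorphism of $X$ onto the separable space $Y\times X/Y$; you correctly guessed that this is the ``advanced result'' being used. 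The trade-off is as you describe: the Bartle--Graves route is shorter to state and yields the stronger structural fact that $X$ is homeomorphic to $Y\times X/Y$ (from which separability, and indeed any topological property preserved by countable products and homeomorphisms, follows at once), while your argument uses nothing beyond the definition of the quotient norm and a countable dense set, and so is self-contained. Either is acceptable; yours is arguably preferable for a lemma the author himself calls ``undoubtedly known.''
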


\begin{proof}

   Let $\theta : X\to X/Y$ be the quotient map. By \cite{BG} there is a continuous map $\psi : X/Y\to X$ such that $\theta(\psi(z)) = z$ for every $z\in X/Y$. It is immediately seen that the map $x\to (x - \psi(\theta(x)),\theta(x))$ is a homeomorphism of $X$ onto the separable space $Y\times X/Y$; the inverse map is $(y,z)\to y + \psi(z)$.

\end{proof}

\begin{proof}[Proof of Proposition \ref{P:compact}]

   We begin with a particular case: let $\xi$ be a uniform Banach bundle. Thus we may consider the following setting: each fiber $p^{-1}(t)$ is a subspace of some Banach space $X$ such that the map $t\to p^{-1}(t)$ from $T$ to the family of all the closed subsets of $X$ has the property that $t\to p^{-1}(t)_1$ is continuous when the collection of all the closed bounded subsets of $X$ is endowed with the Hausdorff metric. We claim that the closed subspace $\tilde{X}$ of $X$ generated by $\cup \{p^{-1}(t) \mid t\in T\}$ is separable. Indeed, with $\{t_n\}$ a countable dense subset of $T$ and $\{x_m^n\}_{m=1}^{\infty}$ a dense subset of $p^{-1}(t_n)_1$, it is easily seen that the set of all linear combinations with rational coefficients of $\{x_n^m \mid n, m\in \mathbb{N}\}$ is dense in $\tilde{X}$. From now on to simplify the notation we shall assume that $\tilde{X} = X$. In this situation the Banach space $\C(T,X) = \C(T)\otimes X$ is separable hence its subspace $\Gamma(\xi)$ is separable too.

   We consider now the general case. Thus, there is a family $\{T_i\}_{i=1}^n$ of closed subsets of $T$ such that $\{\mathrm{Int}(T_i)\}_{i=1}^n$ is a cover of $T$, $\mathrm{Int}(T_i)\neq \mset$ for each $i$, and the restriction of $\xi$ to each $T_i$ is a uniform Banach bundle. We shall prove the conclusion by induction. The step $n=1$ was treated above and we shall suppose now that if the base space can be covered by $n-1$ closed sets as before-mentioned then the conclusion of the theorem is valid. Set $S := \cup_{i=1}^{n-1} T_i$. The restriction map $\varphi\to \varphi\mid S$ maps $\Gamma(\xi)$ onto the separable space $\Gamma(\xi\mid S)$ by \cite[p. 15]{DG}. The kernel of this map $\{\varphi\in \Gamma(\xi)\mid \varphi\mid S \equiv 0\}$ which can be considered as a closed subspace of the separable space $\Gamma(\xi\mid T_n)\}$. Thus $\Gamma(\xi)$ is separable by Lemma \ref{L:separable}.

\end{proof}

In the above proposition we required the bundle to be locally uniform. If one discards this hypothesis it may happen that $\Gamma(\xi)$ is not separable even if the base space is metrizable and all the fibers are separable Banach spaces. Such a situation occurs in \cite[Example 5.1]{M}. We consider now a setting more general than in Proposition \ref{P:compact}.

\begin{thm} \label{T:l.c.}

   Let $\xi := (\E,p,T)$ be a locally uniform Banach bundle. If $T$ is a second countable locally compact space and each fiber $p^{-1}(t),\ t\in T,$ is separable then $\Gamma_0(\xi)$ is separable.

\end{thm}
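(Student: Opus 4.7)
The plan is to reduce the locally compact case to the compact case (Proposition \ref{P:compact}) via an exhaustion of $T$ by compact sets together with a cutoff-function argument.

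First I would use $\sigma$-compactness of $T$ (which holds since $T$ is second countable and locally compact Hausdorff, hence metrizable) to pick compact sets $K_1\subset K_2\subset\cdots$ with $K_n\subset \mathrm{Int}(K_{n+1})$ and $T=\bigcup_n K_n$. By the Urysohn lemma for locally compact Hausdorff spaces, pick continuous $\phi_n:T\to[0,1]$ with $\phi_n\equiv 1$ on $K_n$ and compact support contained in $\mathrm{Int}(K_{n+1})$.

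Next I would verify that Proposition \ref{P:compact} applies to each restricted bundle $\xi\mid K_{n+1}$. The base $K_{n+1}$ is compact and metrizable, the fibers are still separable, and local uniformity restricts: if $\{T_\iota\}$ is the family witnessing local uniformity of $\xi$, then $\{T_\iota\cap K_{n+1}\}$ is a family of closed subsets of $K_{n+1}$ whose relative interiors cover $K_{n+1}$, and the restriction of a uniform Banach bundle to a closed subspace is again uniform. Hence Proposition \ref{P:compact} yields a countable dense set $D_n\subset\Gamma(\xi\mid K_{n+1})$.

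For each $\sigma\in D_n$, define $\widetilde{\phi_n\sigma}:T\to\E$ by $(\phi_n\sigma)(t)$ on $K_{n+1}$ and $0$ elsewhere. I would check that $\widetilde{\phi_n\sigma}\in\Gamma_0(\xi)$ by continuity: at any $t\in\mathrm{Int}(K_{n+1})$ it coincides locally with the continuous section $\phi_n\sigma$, and at any $t$ outside the compact set $\mathrm{supp}(\phi_n)\subset\mathrm{Int}(K_{n+1})$ it vanishes on an open neighborhood; these two open sets cover $T$. Set
\[
S\;:=\;\bigcup_{n=1}^{\infty}\set{\widetilde{\phi_n\sigma}\mid \sigma\in D_n},
\]
a countable subset of $\Gamma_0(\xi)$.

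Finally I would show $S$ is dense. Given $f\in\Gamma_0(\xi)$ and $\eps>0$, the set $\set{t\mid \norm{f(t)}\geq\eps}$ is contained in a compact subset of $T$, hence in some $K_n$, so $\norm{\phi_n f - f}\leq\eps$. Pick $\sigma\in D_n$ with $\norm{\sigma - f\mid K_{n+1}}_\infty<\eps$. Both $\widetilde{\phi_n\sigma}$ and $\phi_n f$ vanish off $K_{n+1}$ because $\mathrm{supp}(\phi_n)\subset\mathrm{Int}(K_{n+1})$, and on $K_{n+1}$ their difference is $\phi_n(\sigma - f\mid K_{n+1})$, of sup norm at most $\eps$. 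Hence $\norm{\widetilde{\phi_n\sigma} - f}\leq 2\eps$. I expect the main technical obstacle to be the clean verification that extension by zero produces a continuous section of $\xi$ on all of $T$, which is precisely what forces the cutoff function to have support strictly inside $\mathrm{Int}(K_{n+1})$ rather than merely in $K_{n+1}$; the remaining checks (restriction preserving local uniformity, and the two-step approximation estimate) are routine given Proposition \ref{P:compact}.
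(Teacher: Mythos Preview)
Your proof is correct and follows essentially the same strategy as the paper's: exhaust $T$ by compacts, apply Proposition~\ref{P:compact} to each restricted bundle, multiply a countable dense family of sections by Urysohn cutoffs, and verify density in $\Gamma_0(\xi)$. The only cosmetic difference is that the paper extends the approximating sections to all of $T$ via \cite[p.~15]{DG} before multiplying by the cutoff (obtaining a $3\epsilon$ estimate), whereas you multiply first on $K_{n+1}$ and extend by zero (obtaining $2\epsilon$); both variants work.
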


\begin{proof}

   There is a sequence of compact subsets $\{T_n\}_{n=1}^{\infty}$ of $T$ such that $\rm{Int}{T_1}\neq \mset$, $T_n\subseteq \rm{Int}(T_{n+1})$, $n = 1,2,\ldots $, and $T = \cup_{n=1}^{\infty} T_n$, see \cite[Theorem XI.7.2]{D}. Let $f_n$ be a real valued continuous function on $T$ such that $f_n\mid T_n \equiv 1$, $0\leq f_n\leq 1$, and $f_n(t) = 0$ if $t\in T\setminus \rm{Int}(T_{n+1})$, $n = 1,2,\ldots$ .For each $n$ the Banach space $\Gamma(\xi\mid T_n)$ is separable by Proposition \ref{P:compact} and we choose a dense countable set $\{\varphi_m^n\}_{m=1}^{\infty}$ in this space. By using again \cite[p. 15]{DG} we extend each $\varphi_m^n$ to a section of $\xi$ which, for simplicity, we shall denote also by $\varphi_m^n$ with the requirement that
    $$
     \|\varphi_m^n(t)\|\leq \|\varphi_m^n\mid T_n\|
    $$
   for each $t\in T$. Denote now $\psi_m^n := f_n\varphi_m^{n+1}$. We claim that $\{\psi_m^n\}_{m,n=1}^{\infty}$ is dense in $\Gamma_0(\xi)$.

   To prove the claim let $\varphi\in \Gamma_0(\xi)$, $\epsilon > 0$ and $n\in \mathbb{N}$ such that
    $$
     \{t\in T\mid \|\varphi(t)\| \geq \epsilon\}\subseteq T_n.
    $$
   Choose $m$ such that $\|\varphi(t) - \varphi_m^{n+1}(t)\| < \epsilon$ for every $t\in T_{n+1}$. For this choice of $m$ we have
    $$
     \|\varphi(t) - \psi_m^n(t)\| = \|\varphi(t) - \varphi_m^{n+1}(t)\| < \epsilon, \ t\in T_n,
    $$
   and
    $$
     \|\varphi_m^{n+1}(t)\|\leq \epsilon + \|\varphi(t)\| < 2\epsilon, \ t\in \rm{Int}(T_{n+1})\setminus T_n,
    $$
   hence
    $$
     \|\varphi(t) - \psi_m^n(t)\| = \|\varphi(t) - f_n(t)\varphi_m^{n+1}(t)\|\leq \|\varphi(t)\| + |f_n(t)|\|\varphi_m^{n+1}(t)\| < 3\epsilon, \ t\in \rm{Int}(T_{n+1})\setminus T_n,
    $$
   and $\|\varphi(t) - \psi_m^n(t)\| = \|\varphi(t)\| < \epsilon$ if $t\in T\setminus \rm{Int}(T_{n+1})$. We got $\|\varphi - \psi_m^n\|\leq 3\epsilon$ and this ends the proof.

\end{proof}

\begin{qu}

   Let $\xi := (\E,p,T)$ be a Banach bundle with $T$ an Eberlein compact. Suppose moreover that each fiber $p^{-1}(t)$ is WCG. Is $\Gamma(\xi)$ WCG? What if $\xi$ is locally uniform?

\end{qu}

We now discuss how certain properties of the space of sections are reflected in properties of the base space.

\begin{thm} \label{T:c}

   Let $\xi := (\E,p,T)$ be a Banach bundle with $T$ compact Hausdorff. If $\Gamma(\xi)$ is separable (WCG) then $T$ is second countable (Eberlein compact, respectively) and each fiber space of $\xi$ is separable (WCG, respectively).

\end {thm}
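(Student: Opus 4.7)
The plan is to establish the three conclusions independently. The fiber claims are straightforward: for each $t\in T$ the evaluation map $\mathrm{ev}_t:\Gamma(\xi)\to p^{-1}(t)$, $\varphi\mapsto\varphi(t)$, is a bounded linear surjection by standard section-extension (cf.\ \cite[p.~15]{DG}), and both separability and the WCG property pass to quotients by closed subspaces. For the two conclusions about the base space $T$ I will assume that every fiber is nontrivial, the zero-fiber set being a closed subset of $T$ on which $\Gamma(\xi)$ carries no information.

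For $T$ second countable when $\Gamma(\xi)$ is separable, I would use the $C(T)$-module structure of $\Gamma(\xi)$ together with the upper semi-continuity of the section norms. Given an open $V\ni t_0$ in $T$, pick $\psi\in\Gamma(\xi)$ with $\psi(t_0)\neq 0$ and $f\in C(T)$ with $f(t_0)=1$ vanishing off $V$; the section $\varphi:=f\psi$ then vanishes off $V$ with $\varphi(t_0)\neq 0$, so $\{s:\|\varphi(s)\|>\tfrac{1}{2}\|\varphi(t_0)\|\}$ is an open neighborhood of $t_0$ contained in $V$ by u.s.c.\ of the norm. Letting $\varphi$ range over a countable dense subset of $\Gamma(\xi)$ and the threshold range over positive rationals, a standard approximation argument produces a countable base for the topology of $T$.

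For $T$ Eberlein when $\Gamma(\xi)$ is WCG, I would appeal to the Amir--Lindenstrauss theorem: the dual ball $(B_{\Gamma(\xi)^*},w^*)$ is an Eberlein compact. For each $t\in T$ let $F_t\subset B_{\Gamma(\xi)^*}$ denote the weak-$*$ compact image of $B_{p^{-1}(t)^*}$ under the isometric embedding $e^*\mapsto e^*\circ\mathrm{ev}_t$, and set $\tilde{K}=\bigcup_t F_t$. Using compactness of $T$ together with u.s.c.\ of the norm, one checks that $\tilde{K}$ is weak-$*$ closed: if $x_\alpha^*\in F_{t_\alpha}$ converges to $x^*$ with $t_\alpha\to t$, then for any $\varphi$ with $\varphi(t)=0$ one has $\|\varphi(t_\alpha)\|\to 0$ and hence $x^*(\varphi)=0$, so $x^*\in F_t$. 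A similar use of multiplication by cut-off $f\in C(T)$ yields $F_t\cap F_{t'}=\{0\}$ for $t\neq t'$, so the map $\pi:\tilde{K}\setminus\{0\}\to T$ sending $x^*$ to the unique $t$ with $x^*\in F_t$ is well defined, and the same closure argument applied to subnets of $(\pi(x_\alpha^*))$ shows $\pi$ continuous. Next, by compactness I would pick finitely many sections $\varphi_1,\ldots,\varphi_n$ and $\delta>0$ with $T=\bigcup_i\{s:\|\varphi_i(s)\|>\delta\}$; the set $\tilde{K}^{\geq}:=\tilde{K}\cap\bigcup_i\{x^*:|x^*(\varphi_i)|\geq\delta/2\}$ is weak-$*$ closed and contained in $\tilde{K}\setminus\{0\}$, and is mapped onto $T$ by $\pi$ (by Hahn--Banach). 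Being a closed subspace of the Eberlein compact $B_{\Gamma(\xi)^*}$, $\tilde{K}^{\geq}$ is itself Eberlein, and the Benyamini--Rudin--Wage theorem on continuous images of Eberlein compacts yields the conclusion.

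The delicate step is the third: the weak-$*$ closedness of $\tilde{K}$ and the continuity of the selection map $\pi$ both hinge on the upper semi-continuity of the norm, and the construction of the covering sections $\varphi_1,\ldots,\varphi_n$ requires every fiber to be nontrivial throughout.
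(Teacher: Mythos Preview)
Your argument has a recurring misstep: you invoke upper semi-continuity of the norm to conclude that the superlevel sets $\{s:\|\varphi(s)\|>c\}$ are open. Upper semi-continuity gives the opposite direction---it is the \emph{sublevel} sets $\{s:\|\varphi(s)\|<c\}$ that are open---and for $(H)$ bundles the superlevel sets can fail to be open. A concrete example: take $T=[0,1]$, each fiber $\mathbb{C}$, bundle space $T\times\mathbb{C}$ with the product topology, and norm $\|(t,z)\|=|z|$ for $t>0$ but $\|(0,z)\|=2|z|$; then for the continuous section $\varphi(t)=(t,1/2)$ one has $\{s:\|\varphi(s)\|>1/2\}=\{0\}$. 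What \emph{is} true---via the zero-section axiom rather than u.s.c.---is that whenever $\varphi(t_0)\neq 0$ there exist an open neighbourhood $U$ of $t_0$ and some $c>0$ with $\|\varphi(s)\|>c$ on $U$: otherwise a net $s_\alpha\to t_0$ with $\|\varphi(s_\alpha)\|\to 0$ would force $\varphi(s_\alpha)\to 0_{t_0}$ in $\E$, contradicting continuity of $\varphi$ and Hausdorffness of $\E$. With this correction your separability argument can be repaired (use the interiors of the sets $\{s:\|\varphi_n(s)\|>q\}$ as the countable base), and the finite-cover step in your Eberlein argument goes through by extracting the subcover from these open sets $U$ rather than from the superlevel sets themselves.

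Modulo this, your route differs genuinely from the paper's. The paper treats both conclusions with a single local construction: for each $t$ it produces a compact neighbourhood $U_t$ and a section $\varphi_t$ with $\|\varphi_t\|$ bounded below on $U_t$, so that $f\mapsto f\cdot(\varphi_t|_{U_t})$ isomorphically embeds $C(U_t)$ into $\Gamma(\xi|_{U_t})$. Since $\Gamma(\xi|_{U_t})$ is a quotient of $\Gamma(\xi)$, it inherits separability (resp.\ the WCG property), hence so does its subspace $C(U_t)$ in the separable case, while in the WCG case one passes to the dual ball of $C(U_t)$; this makes each $U_t$ second countable (resp.\ Eberlein), and one globalizes by a finite cover together with the stability of Eberlein compacts under continuous images. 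Your Eberlein argument---realizing $T$ directly as a continuous image of a weak-$*$ closed subset of $B_{\Gamma(\xi)^*}$---avoids the detour through $C(U_t)$ but demands more bookkeeping in the dual; the paper's approach is shorter and handles the two cases with one construction.
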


\begin{proof}

   We begin by showing that each point $t$ of the base space has a compact neighbourhood $U_t$ such that the Banach space $C(U_t)$ is isomorphic with a subspace of $\Gamma(\xi\mid U_t)$. By \cite[p. 15]{DG} there is a section $\varphi_t$ of $\Gamma(\xi)$ such that $\|\varphi_t(t)\| = 1$. From the upper semicontinuity on $\E$ of the norm it follows that $\{s\in T\mid \|\varphi_t(t) - \varphi_t(s)\| < 1/2\}$ is a neighbourhood $V_t$ of $t$. For each $s\in V_t$ we have $\|\varphi_t(s)\| > 1 - 1/2 = 1/2$. Let $U_t$ be a compact neighbourhood of $t$ contained in $V_t$. The map $f\to f\cdot \varphi_t\mid U_t$ is an isomorphism of $C(U_t)$ into $\Gamma(\xi\mid U_t)$. Indeed,
   $$
    \frac{1}{2}\|f\|\leq \|f\cdot \varphi_t\mid U_t\|\leq \|f\|\|\varphi_t\|.
   $$
   We shall denote by $\Phi_t$ this map of $C(U_t)$ into $\Gamma(\xi\mid U_t)$

   Now, if $\Gamma(\xi)$ is separable each $\Gamma(\xi\mid U_t)$ is separable hence each $C(U_t)$ is separable. Thus each point $t\in T$ has a second countable neighbourhood and we infer that $T$ is second countable.

   If $\Gamma(\xi)$ is WCG then each $\Gamma(\xi\mid U_t)$ is WCG by being a quotient of $\Gamma(\xi)$. Therefore all the norm closed balls of $\Phi_t(C(U_t))^*$, $t\in T$,  are Eberlein compacts in their weak star topology by \cite[Theorem 4.9]{Z}. Hence the image $B_t$ by $(\Phi_t^*)^{-1}$ of the norm closed unit ball of $C(U_t)^*$ is an Eberlein compact in its weak star topology since it is a weak star closed subset of a norm closed ball centered at the origin of $\Phi_t(C(U_t))^*$. Since $U_t$ is homeomorphic with a weak star closed subset of $B_t$ we infer that $U_t$ is an Eberlein compact. It turns out that $T$ is a finite union ot Eberlein compact. The disjoint union $S$ of these finitely many Eberlein compacts is obviously an Eberlein compact. There is a natural continuous map of $S$ onto $T$ hence $T$ is an Eberlein compact by \cite[Theorem 2.15]{Z}.

\end{proof}

For locally compact base spaces we have the following result.

\begin{thm}

   If $\xi := (\E,p,T)$ is a Banach bundle with a locally compact base space $T$ and $\Gamma_0(\xi)$ is separable then $T$ is second countable and each fiber space of $\xi$ is separable.

\end{thm}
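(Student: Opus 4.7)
The plan is to reduce to Theorem \ref{T:c} by covering $T$ with countably many compact subsets and invoking that theorem on each. First I would extract $\sigma$-compactness of $T$ from the separability of $\Gamma_0(\xi)$; then I would exhaust $T$ by an increasing sequence of compact sets with nested interiors and apply Theorem \ref{T:c} to each piece.

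To show $T$ is $\sigma$-compact, fix a countable dense subset $\{\psi_n\}_{n=1}^{\infty}$ of $\Gamma_0(\xi)$. For each $n$ and each positive integer $m$, the set $\{t \in T : \|\psi_n(t)\| \geq 1/m\}$ is compact since $\psi_n$ vanishes at infinity, so their union $K$ is $\sigma$-compact. If $t \notin K$, then $\psi_n(t) = 0$ for every $n$, and by density $\varphi(t) = 0$ for every $\varphi \in \Gamma_0(\xi)$; since the values of sections at $t$ exhaust $p^{-1}(t)$ by \cite[p.~15]{DG}, the fiber over such a $t$ would have to be trivial. Under the standing convention that the fibers of $\xi$ are nontrivial (implicit already in the proof of Theorem \ref{T:c}), this forces $K = T$.

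By local compactness and $\sigma$-compactness, \cite[Theorem XI.7.2]{D} supplies a sequence $\{T_n\}_{n=1}^{\infty}$ of compact subsets of $T$ with $T_n \subseteq \mathrm{Int}(T_{n+1})$ and $T = \bigcup_n T_n$. For each $n$, every section of $\xi \mid T_{n+1}$ extends to a section of $\xi$ by \cite[p.~15]{DG}, and multiplying the extension by a continuous compactly supported function equal to $1$ on $T_{n+1}$ places it in $\Gamma_0(\xi)$ without disturbing the restriction to $T_{n+1}$. Hence $\Gamma_0(\xi) \to \Gamma(\xi \mid T_{n+1})$ is surjective, so $\Gamma(\xi \mid T_{n+1})$ is separable, and Theorem \ref{T:c} gives that $T_{n+1}$ is second countable and that each fiber of $\xi \mid T_{n+1}$ is separable.

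Since $T = \bigcup_n \mathrm{Int}(T_{n+1})$ is then a countable union of open second countable subspaces, $T$ is second countable, and separability of each fiber is already contained in the above application of Theorem \ref{T:c} (and can also be seen directly from the fact that $p^{-1}(t)$ is the image of the separable space $\Gamma_0(\xi)$ under evaluation at $t$). The main obstacle is the first step: translating separability of $\Gamma_0(\xi)$ into $\sigma$-compactness of $T$. Once that is in hand, the remainder is a routine patching together of the compact case already settled in Theorem \ref{T:c}.
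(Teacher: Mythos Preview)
Your argument is correct and follows essentially the same route as the paper: extract $\sigma$-compactness of $T$ from a countable dense set in $\Gamma_0(\xi)$, exhaust $T$ by compacts with nested interiors via \cite[Theorem XI.7.2]{D}, and apply Theorem~\ref{T:c} on each piece. You are in fact more explicit than the paper in two places---you justify the surjectivity of the restriction $\Gamma_0(\xi)\to\Gamma(\xi\mid T_{n+1})$ (which the paper tacitly uses when invoking Theorem~\ref{T:c}) and you flag the implicit hypothesis that the fibers are nontrivial, without which neither argument (nor the statement itself) goes through.
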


\begin{proof}

   Let $\{\varphi_n\}_{n=1}^{\infty}$ be a dense countable subset of $\Gamma_0(\xi)$. For each $n$ the set $\{t\in T\mid \|\varphi_n(t)\|\}$ is $\sigma$-compact and $T$ is the union of all these sets. As in the proof of Theorem \ref{T:l.c.} there is an increasing sequence of compact subsets $\{T_m\}$ such that $T_m\subset \rm{Int}(T_{m+1})$ and $T = \cup_{m=1}^{\infty} T_m$. Each $T_m$ is second countable by Theorem \ref{T:c} hence $T$ is second countable. The second assertion is obvious.

\end{proof}

% -------------------------------------------
\bibliographystyle{amsplain}

% -----------------------------------
\end{document}